\newtheorem{thrm}{Theorem}[section]
\newtheorem*{thrm*}{Theorem}
\newtheorem{thmalpha}{Theorem}
\newtheorem{lem}[thrm]{Lemma}
\theoremstyle{definition}
\newtheorem{remark}[thrm]{Remark}
\numberwithin{equation}{section}
\DeclareMathOperator{\disp}{disp}
\newcommand{\kdisp}{k\!\operatorname{-disp}}
\newcommand{\odisp}{0\!\operatorname{-disp}}
\newcommand{\N}{\ensuremath{{\mathbb N}}}
\newcommand{\R}{\ensuremath{{\mathbb R}}}
\newcommand{\Pro}{\ensuremath{{\mathbb P}}}
\newcommand{\vol}{\mathrm{vol}}
\newcommand{\eps}{\varepsilon}
\author{ Aicke Hinrichs \and Joscha Prochno \and Mario Ullrich \and Jan Vyb\'iral}
\date{\today}
\address[Aicke Hinrichs]{Institut f\"ur Analysis\\
Johannes Kepler Universit\"at Linz\\
Altenbergerstrasse 69\\
4040 Linz\\
Austria}
\email{aicke.hinrichs@jku.at}
\address[Mario Ullrich]{Institut f\"ur Analysis\\
Johannes Kepler Universit\"at Linz\\
Altenbergerstrasse 69\\
4040 Linz\\
Austria}
\email{mario.ullrich@jku.at}
\address[Joscha Prochno]{School of Mathematics \& Physical Sciences\\
University of Hull\\
Cottingham Road\\
Hull HU6 7RX\\
United Kingdom}
\email{j.prochno@hull.ac.uk}
\address[Jan Vyb\'iral]{Department of Mathematics\\
Faculty of Nuclear Sciences and Physical Engineering\\
Czech Technical University\\
Trojanova 13\\
12000 Praha \\
Czech Republic}
\email{jan.vybiral@fjfi.cvut.cz}
\thanks{We would like to express our gratitude to the Erwin Schr\"odinger International Institute for Mathematics and Physics for its hospitality during
the programme on ``Tractability of High Dimensional Problems and Discrepancy'', where some part of this research was carried out. We also gratefully
acknowledge the support of the Oberwolfach Research Institute for Mathematics, where initial discussion were held during the workshop
``Perspectives in High-Dimensional Probability and Convexity''. JP has been supported by a Visiting International Professor Fellowship from the Ruhr University Bochum. JV was supported by the grant P201/18/00580S
of the Grant Agency of the Czech Republic and by the Neuron Fund for Support of Science.}
\keywords{Dispersion, Complexity, High-dimensional convex bodies}
\subjclass{68U05, 68Q25, 03D15}
\begin{document}

\title[Minimal $k$-Dispersion]{The minimal $k$-dispersion of point sets in high-dimensions}

\begin{abstract}
In this manuscript we introduce and study an extended version of the minimal dispersion of point sets, which has recently attracted considerable attention.
Given a set $\mathscr P_n=\{x_1,\dots,x_n\}\subset [0,1]^d$ and $k\in\{0,1,\dots,n\}$, we define the $k$-dispersion to be the volume of the largest box
amidst a point set containing at most $k$ points. The minimal $k$-dispersion is then given by the infimum over all possible point sets of cardinality $n$.
We provide both upper and lower bounds for the minimal $k$-dispersion that coincide with the known bounds for the classical minimal dispersion
for a surprisingly large range of $k$'s.
\end{abstract}
\maketitle



\section{Introduction and main results}

A classical problem in computational geometry and complexity asks for the size of the largest empty and axis-parallel box given a point configuration in the cube $[0,1]^2$. From a complexity point of view this \emph{maximum empty rectangle problem} was already studied by Naamad, Lee, and Hsu \cite{NLH1984} who provided an $\mathcal O(n^2)$-time algorithm as well as an $\mathcal O(n\log^2(n))$-expected-time algorithm (when the points are drawn independently and uniformly at random) to actually find such a rectangle. Further results in this direction have been obtained by Chazelle, Drysdale, and Lee in \cite{CDL1986}.

In the last decade the study of high-dimensional (geometric) structures has become increasingly important and it has been realized by now that the presence of high dimensions forces a certain regularity in the geometry of the space while, on the other hand, it unfolds various rather unexpected phenomena. The maximum empty rectangle problem studied in \cite{NLH1984} has its natural multivariate counterpart. Let $d,n\in\N$ and denote by $\mathscr B^d_{\text{ax}}$ the collection of axis-parallel boxes in $[0,1]^d$. Then the ($n$-th) minimal dispersion is defined to be the quantity
\begin{equation}\label{eq:min:1}
\disp^*(n,d) := \inf_{\mathscr P\subset[0,1]^d\atop{\#\mathscr P=n}}\,\sup_{\mathbb B\in\mathscr B_{\text{ax}}\atop{\mathbb B\cap \mathscr  P=\emptyset}} \vol_d(\mathbb B),
\end{equation}
where $\#$ denotes the cardinality of a set and $\vol_d(\cdot)$ the $d$-dimensional Lebesgue measure. In other words, the minimal dispersion depicts the size of the largest empty, axis-parallel box amidst \emph{any} point set of cardinality $n$ in the $d$-dimensional cube. The interest in this notion is in parts motivated by applications in approximation theory, more precisely, in problems concerning the approximation of high-dimensional rank one tensors \cite{BDDG2014,NR2016} and in Marcinkiewicz-type discretizations of the uniform norm of multivariate trigonometric polynomials \cite{T2018}. Even though the minimal dispersion is conceptually quite simple and the problem of determining (or estimating) its order attracted considerable attention in the past $3$ years (see, e.g., \cite{AHR17, DJ2013, K2018, R2018,S2018,U2018, UV2018}), its behavior, simultaneously in the number of points $n$ and in the dimension $d$, is still not completely understood. 

Let us briefly describe the current state of the art. Aistleitner, Hinrichs, and Rudolf proved in \cite[Theorem 1]{AHR17} that, for all $d,n\in\N$,
\[
\disp^*(n,d) \geq \frac{\log_2(d)}{4(n+\log_2(d))}\,.
\]
In particular, this shows that the volume of the largest empty box increases with the dimension $d$. In the same paper, the authors communicate an upper bound, which is attributed to Larcher (see \cite[Section 4]{AHR17}) and shows that
\[
\disp^*(n,d) \leq \frac{2^{7d+1}}{n}\,.
\]  
This improves upon a bound of Rote and Tichy \cite[Proposition 3.1]{RT1996} when $d\geq 54$. Under the assumption that the number of points satisfies $n> 2d$, it was recently proved by Rudolf \cite[Corollary 1]{R2018} that
\[
\disp^*(n,d) \leq  \frac{4d}{n} \log_2\Big(\frac{9n}{d}\Big)\,.
\]

In contrast to the probabilistic methods, several authors then provided explicit constructions of sets with small dispersion and small number of points. 
For example, Krieg \cite[Theorem]{K2018} shows that for $\varepsilon\in(0,1)$ and $d\ge 2$ there is a sparse grid with the number of points
bounded by $(2d)^{\log_2(1/\varepsilon)}$ and dispersion at most $\varepsilon$. 
Temlyakov proved in \cite[Theorem 2.1 and Theorem 4.1]{T2018'} that Fibonacci and 
Frolov point sets achieve the dispersion of optimal order $n^{-1}$, 
but without paying extra attention to the dependence on $d$, 
see also~\cite{U2018b}.

An essential breakthrough was achieved by Sosnovec in \cite[Theorem 2]{S2018}. He provided a randomized construction of a set with at most
$c_\varepsilon \log_2(d)$ points in $[0,1]^d$ with dispersion at most $\varepsilon\in(0,1/4]$. Here, $c_\varepsilon\in(0,\infty)$
is a quantity depending only on $\varepsilon.$ The $\varepsilon$-dependence was then refined by the third and fourth author in \cite[Theorem 1]{UV2018}. More precisely, for every $\varepsilon\in (0,1/2)$ and $d\ge 2$, they provided a randomized construction of a point set $\mathscr P$ with
$$
\#{\mathscr P}\le 2^7\frac{(1+\log_2(\varepsilon^{-1}))^2}{\varepsilon^2}\log_2 (d)
$$
and dispersion at most $\varepsilon$. This result can be also reformulated as 
\begin{equation}\label{eq:UV}
\disp^*(n,d)\le c\log_2(n)\sqrt{\frac{\log_2(d)}{n}}
\end{equation}
for $n,d\ge 2$ and some absolute constant $c\in(0,\infty)$.

In this manuscript, we generalize the notion of dispersion by introducing a quantity, which  measures the size of the largest box amidst a point 
set containing \emph{at most} $k$ points. The minimal dispersion \eqref{eq:min:1} then corresponds to the case $k=0$.

For $d,n\in\N$ and $k\in\N\cup \{0\}$, we define the $k$-dispersion of a point set 
$\mathscr P_n=\{x_1,\dots,x_n\}\subset [0,1]^d$ to be the quantity
\[
\kdisp(\mathscr P_n,d) := \sup\Big\{\vol_d(\mathbb B)\,\colon\, \mathbb B\in \mathscr B_{\text ax}^d \text{ with }\# (\mathscr P_n\cap \mathbb B)\leq k\Big\},
\]
where $\mathscr B_{\text ax}^d$ is the collection of all axis-parallel boxes inside the $d$-dimensional cube $[0,1]^d$. 
The minimal $k$-dispersion 
is defined to be the infimum 
over all possible point sets of cardinality $n$, i.e.,
\[
\kdisp^*(n,d) := \inf_{\substack{\mathscr P_n\subset [0,1]^d\\ \#\mathscr P_n=n}} \kdisp(\mathscr P_n,d).
\]

Our first main result provides an upper bound on the minimal $k$-dispersion.

\begin{thmalpha}\label{thm:main 1}
There exists a constant $C\in(0,\infty)$ such that for any $d\geq 2$ and all $k,n\in\N$ with $k<n/2$, we have  
\begin{equation}\label{eq:dispk}
\kdisp^*(n,d) \,\le\, C\,\max\left\{\log_2 (n)\sqrt{\frac{\log_2 (d)}{n}},\,
	k\,\frac{\log_2(n/k)}{n}\right\}.
\end{equation}
\end{thmalpha}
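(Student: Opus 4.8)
The plan is to prove the bound by splitting the maximum into its two regimes and handling each separately, then combining via a union of point sets. The first term in the maximum, $\log_2(n)\sqrt{\log_2(d)/n}$, already appears in the known bound \eqref{eq:UV} for the classical dispersion $\disp^*(n,d) = 0\text{-}\mathrm{disp}^*(n,d)$. Since the $k$-dispersion of a point set is obviously bounded above by its $0$-dispersion (allowing a box to contain up to $k$ points is a weaker emptiness requirement, so the supremum is over a larger family), we automatically have $\kdisp^*(n,d) \le \odisp^*(n,d)$. Thus the first term comes essentially for free from \eqref{eq:UV} applied with $n$ points. The real content is the second term $k\,\log_2(n/k)/n$, which must dominate when $k$ is large relative to $\sqrt{n\log_2(d)}/\log_2(n)$.

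For the second term, the natural strategy is a \emph{superposition} (or layering) construction. I would take a point set $\mathscr Q$ of cardinality roughly $m \approx n/k$ that achieves small \emph{classical} dispersion, and then replace each point by $k$ nearby copies, or more cleanly, overlay $k$ translated/scaled copies of a good low-dispersion configuration so that any box containing few of the superimposed points is forced to be small. The key combinatorial idea is that if a box contains at most $k$ points of the union of $k$ disjoint good sets, then by pigeonhole it must contain at most one point from at least one of those sets — hence the box is essentially empty relative to a single good configuration and inherits that configuration's dispersion bound. Running this with $m \sim n/k$ points per layer and invoking a classical upper bound of the form $\disp^*(m,d) \lesssim \frac{\log_2(m)}{m}$ (or more precisely the Rudolf-type bound $\frac{d}{m}\log_2(m/d)$ balanced appropriately) should produce a bound of order $\frac{k}{n}\log_2(n/k)$. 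The condition $k < n/2$ ensures $m = n/k > 2$, so the logarithmic factor is meaningful and the per-layer sets are nontrivial.

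More concretely, I would carry out the following steps in order. First, fix the decomposition $n = k \cdot m$ (up to integer rounding, with $m \approx n/k$) and select $k$ point sets $\mathscr P^{(1)}, \dots, \mathscr P^{(k)}$, each of size $m$, each with small $0$-dispersion, chosen so that they are ``independent'' in the relevant coordinate-projection sense. Second, set $\mathscr P = \bigcup_{j=1}^k \mathscr P^{(j)}$, so $\#\mathscr P = n$. Third, take any axis-parallel box $\mathbb B$ with $\#(\mathscr P \cap \mathbb B) \le k$ and apply pigeonhole to find an index $j_0$ with $\#(\mathscr P^{(j_0)} \cap \mathbb B) \le 1$; after possibly shrinking $\mathbb B$ slightly to remove that single point (which changes the volume only by a controlled factor, or is handled by a $1$-dispersion variant), conclude $\vol_d(\mathbb B) \le \disp^*(m, d)$ or a small multiple thereof. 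Fourth, insert the classical upper bound on $\disp^*(m,d)$ with $m \approx n/k$ to obtain the $\frac{k}{n}\log_2(n/k)$ term.

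The hard part will be making the pigeonhole argument genuinely yield an \emph{empty} (or near-empty) box with respect to a single layer while controlling the volume loss. A box containing one stray point of $\mathscr P^{(j_0)}$ is not empty, so one must either work with the $1$-dispersion of each layer (which requires knowing that $1$-dispersion is comparable to $0$-dispersion for these constructions) or arrange the layers with enough separation that a single point can be excised by shrinking one side of the box by a negligible factor. Controlling this shrinkage uniformly across all boxes, and ensuring the $k$ layers can be packed into $[0,1]^d$ without their small-dispersion property degrading (the volumes must genuinely multiply down rather than interfere), is the delicate technical core. I would expect the cleanest route to be a direct randomized construction in the spirit of \cite{S2018, UV2018}, where each of the $n$ points is placed at random and a union bound over a suitably discretized family of ``critical'' boxes shows that, with positive probability, every box of volume exceeding $C\frac{k}{n}\log_2(n/k)$ captures more than $k$ points; this sidesteps the explicit layering and folds the $k$-dependence directly into the tail estimate for a $\mathrm{Binomial}(n, \vol_d(\mathbb B))$ count exceeding $k$.
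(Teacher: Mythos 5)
Your central route has two genuine flaws. First, the monotonicity you invoke is backwards: enlarging $k$ enlarges the family of admissible boxes in the supremum, so $\kdisp(\mathscr P_n,d)\ge \odisp(\mathscr P_n,d)$ and hence $\kdisp^*(n,d)\ge \odisp^*(n,d)$ (your own parenthetical ``the supremum is over a larger family'' already says this). The first term of the maximum therefore does \emph{not} come for free from \eqref{eq:UV}; the entire content of the theorem is that the larger quantity $\kdisp^*$ still obeys essentially the same bound. Second, the layering construction cannot prove the theorem even in principle. Each layer has only $m\approx n/(k+1)$ points, and by the lower bound of Aistleitner, Hinrichs and Rudolf, $\odisp^*(m,d)\ge \frac{\log_2(d)}{4(m+\log_2(d))}$, which is at least $1/8$ whenever $\log_2(d)\ge m$. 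Taking, say, $\log_2(d)=m=n/k$ with $m\to\infty$ and $k$ exponentially large in $m$, the right-hand side of \eqref{eq:dispk} is of order $\log_2(m)/m\to 0$, while the best bound any single layer can deliver stays above $1/8$; no fixed constant $C$ closes this gap. (There is also the smaller issue that pigeonhole over $k$ layers only yields a layer meeting the box in at most one point, so you would need a $1$-dispersion bound or $k+1$ layers; that is fixable, but the per-layer cardinality is not.)

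The one viable idea is the one you relegate to your last sentence: a direct randomized construction with a union bound and a binomial tail estimate for the event that a box of volume exceeding $\varepsilon$ captures at most $k$ of $n$ independently sampled points. This is precisely what the paper does, but the step you leave entirely unaddressed is the discretization: one cannot union-bound over the infinitely many large boxes directly. The paper samples from a finite grid $\mathbb M_m^d$ with $m=\lceil\log_2(1/\varepsilon)\rceil$, partitions the boxes of volume $>2^{-m}$ into classes $\Omega_m(s,p)$ that share a common core box $\mathbb B_m(s,p)$ hit by a random grid point with probability at least $2^{-(m+4)}$, bounds the number of nonempty classes by $\exp\bigl(m2^m\log(2^{m+3}d)\bigr)$, and uses the tail bound $\Pro\bigl(\#(\mathbb X\cap\mathbb B_m(s,p))\le k\bigr)\le \frac{2n^k}{(k-1)!}\exp\bigl(-n2^{-m-4}\bigr)$; balancing the entropy term against the tail term is exactly what produces the two branches of the maximum, and sampling with repetition additionally forces a short multiset-versus-set reduction. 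Without this discretization and the quantitative tail estimate, your sketch does not yet constitute a proof.
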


\begin{remark} The minimal $k$-dispersion is easily seen to be non-decreasing in $k$. On the other hand,
a comparison of \eqref{eq:UV} and \eqref{eq:dispk} reveals that the upper bound of minimal dispersion and minimal $k$-dispersion
are of the same order for a large range of $k$'s. Indeed, if $k=k(n,d)$ increases with the number of points $n$ and the dimension $d$ while satisfying  
\[
k(n,d) \leq c \sqrt{n\cdot\log_2(d)}\,,
\]
for some absolute constant $c\in(0,\infty)$, then \eqref{eq:UV} and \eqref{eq:dispk} provide the same order in $n$ and $d$. Motivated by this result, we conjecture that \eqref{eq:UV} actually offers a lot of space for improvement.
\end{remark}

The second result establishes the following lower bound on the minimal $k$-dispersion.

\begin{thmalpha}\label{thm:main 2}
Let $k,n,d\in\N$. Then
\[
\kdisp^*(n,d) \,\ge\, \frac18\,\min\left\{1, \frac{k+\log_2(d)}{n}\right\}.
\]
\end{thmalpha}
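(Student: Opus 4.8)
The plan is to combine two lower bounds, each responsible for one summand in the numerator $k+\log_2 d$, and to merge them so that the single constant $\tfrac18$ survives. Throughout write $L=\log_2 d$ and note first that the case $k\ge n$ is trivial: the box $[0,1)^d$ contains all $n\le k$ points and has volume $1\ge\tfrac18\ge\tfrac18\min\{1,(k+L)/n\}$, so we may assume $k<n$. The two estimates I would prove are the \emph{slab bound} $\kdisp^*(n,d)\ge \frac{k+1}{n+1}$ and the \emph{dimension bound} $\kdisp^*(n,d)\ge \frac{L}{4(n-k+L)}$, the latter being the bound of Aistleitner--Hinrichs--Rudolf with the point count reduced from $n$ to $n-k$. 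Taking the maximum of the two, a short computation gives the theorem: if $k+L\ge n$ then $n-k\le L$, so the dimension bound is at least $\frac{L}{4\cdot 2L}=\tfrac18\ge\tfrac18\min\{1,(k+L)/n\}$; and if $k+L<n$ then the dimension bound is at least $\frac{k+L}{8n}$ precisely when $L(n-L)\ge k(n-k)$, whereas in the opposite case one checks $k>L$, whence the slab bound gives $\frac{k+1}{n+1}\ge \frac{k}{2n}\ge \frac{k+L}{8n}$; together these cover all $0\le k<n$.

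For the slab bound, fix one coordinate $j$ and sort the corresponding coordinates $0=:t_0\le t_1\le\dots\le t_n\le t_{n+1}:=1$ of the points. The $n+1$ gaps $t_{i+1}-t_i$ sum to $1$, so partitioning them into $\lfloor (n+1)/(k+1)\rfloor$ disjoint blocks of $k+1$ consecutive gaps produces a block of total length at least $\frac{k+1}{n+1}$. The half-open box that equals the corresponding interval in coordinate $j$ and is full in the other $d-1$ coordinates has volume at least $\frac{k+1}{n+1}$ and meets at most $k$ points. This is elementary and uses a single direction; in particular it never sees the dimension, which is why it is tight at $\frac{k+1}{n+1}$ for equispaced coordinates and cannot by itself produce the summand $L$.

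The dimension bound is the heart of the matter and I would obtain it by adapting the adaptive construction of Aistleitner--Hinrichs--Rudolf. One builds a box that is full in all but a few coordinates, iteratively restricting coordinates so as to discard points; the key point of their argument is that among the $d$ available directions there is always one in which a restriction removes points at a favourable trade-off between points discarded and volume lost, and it is this abundance of directions that yields a factor $\log_2 d$ in the numerator rather than the $\log_2 n$ that a fixed-direction argument would give. The modification needed for $k$-dispersion is to halt the construction as soon as the current box contains at most $k$ points instead of running it down to an empty box. Since in the worst case points are shed one level at a time, stopping with $k$ points still inside effectively removes only $n-k$ points, and the AHR estimate then returns $\frac{L}{4((n-k)+L)}$.

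The main obstacle is exactly this last adaptation: one must verify that permitting $k$ survivors genuinely lowers the effective point count to $n-k$ in the AHR efficiency estimate while leaving the constant $\tfrac14$ untouched, rather than having the gain absorbed into lower-order terms of their recursion. A secondary, purely arithmetic, difficulty is the merging step: a naive maximum of the empty-box bound $\frac{L}{4(n+L)}$ and the slab bound degrades the constant to roughly $\tfrac1{12}$ in the borderline regime $k+L\approx n$, and it is precisely the reduction of the point count to $n-k$ (equivalently, the identity $L(n-L)\ge k(n-k)$ used above) that restores the clean constant $\tfrac18$.
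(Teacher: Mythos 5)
Your overall strategy --- a one-dimensional ``slab'' bound combined with a dimension-dependent bound, merged by a case analysis on whether $k\le\log_2 d$ --- is genuinely different from the paper's, and the arithmetic of your merging step checks out, but both ingredient lemmas have problems. First, the slab bound $\kdisp^*(n,d)\ge\frac{k+1}{n+1}$ is false: if all $n$ points sit at the centre of the cube, any box containing at most $k<n$ of them must miss the centre and hence has volume at most $1/2$, while $\frac{k+1}{n+1}>\frac12$ as soon as $2k\ge n$; parameters such as $d=2$, $n=5$, $k=3$ fall into exactly the subcase ($k>\log_2 d$ and $k+\log_2 d<n$) in which you invoke the lemma. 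The flaw in your derivation is that when $k+1$ does not divide $n+1$ the disjoint blocks of $k+1$ consecutive gaps do not exhaust the $n+1$ gaps, so their lengths need not sum to $1$. The repair is to cover the gaps by $\lceil(n+1)/(k+1)\rceil$ possibly overlapping windows, which yields the weaker but correct bound $\frac{k+1}{n+k+2}$; this still closes your case analysis, since there you only need $\frac{k+1}{n+k+2}\ge\frac{k}{2n}\ge\frac{k+L}{8n}$, and the last inequality amounts to $3k\ge L$, which holds because $k>L$ in that subcase.

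Second, and more seriously, the dimension bound $\kdisp^*(n,d)\ge\frac{\log_2 d}{4(n-k+\log_2 d)}$ --- which you correctly identify as the heart of the matter --- is not proved: you propose to rerun the Aistleitner--Hinrichs--Rudolf iteration with an early stopping rule and you yourself flag the verification that the constant $\frac14$ survives as the main open obstacle. As written this is a plan, not a proof. The bound you want actually follows from their theorem used as a black box in two lines: delete any $k$ of the $n$ points, apply their lower bound to the remaining $n-k$ points to obtain an empty box of volume at least $\frac{\log_2 d}{4(n-k+\log_2 d)}$, and note that this box contains at most the $k$ deleted points of the original set. With that substitution and the corrected slab bound, your argument goes through. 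For comparison, the paper proceeds differently: a pigeonhole argument on the sign-pattern matrix, ignoring $k$ rows, gives $\kdisp^*\bigl(k+\lceil\log_2 d\rceil,d\bigr)\ge\frac14$ directly, and a single application of the restriction recursion $\kdisp^*(n,d)\ge\frac{(\ell+1)\,\kdisp^*(\ell,d)}{n+\ell+1}$ with $\ell=k+\lceil\log_2 d\rceil$ then handles both summands of $k+\log_2 d$ at once, with no case analysis on the relative size of $k$ and $\log_2 d$.
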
 



\section{Proof of Theorem \ref{thm:main 1} -- the upper bound}\label{sec:thm1-multi}

We shall present here the proof for the upper bound of the minimal $k$-dispersion.
In the proof, we modify the ideas developed in \cite{S2018} and \cite{UV2018} to our setting. 

\subsection{The idea of proof}

Before we start let us briefly discuss the strategy of the proof. For every $\varepsilon\in(0,1/4)$,
we construct a set ${\mathbb X}=\{x^1,\dots,x^n\}\subset[0,1]^d$ with a small number of elements and small $k$-dispersion.
This set is constructed (similarly to \cite{S2018} and \cite{UV2018}) by random sampling from a discrete mash in $[0,1]^d$.
To allow for independence across the steps of this sampling, it might happen that some points of the mash are actually
sampled more than once. Such points were naturally discarded in \cite{UV2018}, because they could not influence if a box ${\mathbb B}\subset[0,1]^d$
intersects ${\mathbb X}$, or not.

Here, we need to keep a more detailed track of the intersection of boxes with ${\mathbb X}$. Therefore, we allow for repeated sampling,
and ${\mathbb X}$ will actually be a multiset. In Section \ref{subsec:multiset}, we argue in detail that this modification does not alter
the minimal $k$-dispersion. It is then not difficult to see that each box of large enough volume contains indeed at least $k$ points
from the multiset ${\mathbb X}$ with high probability. Unfortunately, this is still not enough to apply the union bound, as there are infinitely many
boxes with large volume. We will therefore divide them into finitely many groups (called $\Omega_{m}(s,p)$ later on), and show that even the intersection
of all boxes included in $\Omega_m(s,p)$ still contains at least $k$ points from ${\mathbb X}$ with high probability. At long last, we can apply the union
bound over all admissible parameter pairs $(s,p)$.

\subsection{A random multiset}\label{subsec:multiset}

Let $\varepsilon\in (0,1/4)$ be fixed and put $m=\lceil\log_2(1/\eps)\rceil$. 
We define a one-dimensional set via
\[
\mathbb M_m:=\left\{\frac{1}{2^{m}},\dots,\frac{2^{m}-1}{2^{m}}\right\}\subset [0,1].
\]
For $n\in\N$, we construct a random multiset 
$\mathbb X=\{x^1,\dots,x^n\}$ by sampling independently and uniformly at random from 
the points in $\mathbb M_m^d$, where $n$ will later be the number of points in Theorem \ref{thm:main 1}. 
Note that a multiset $\mathbb X$ in the cube $[0,1]^d$ is naturally identified 
with the \emph{multiplicity function} $\mathbb X\colon [0,1]^d\to \N_0$, 
where $\mathbb X(z)$ gives the multiplicity of $z\in [0,1]^d$ in $\mathbb X$.
For ${\mathbb B}\in {\mathscr B}_{ax}$, we define
\[
\#\bigr(\mathbb X \cap {\mathbb B}\bigl) \,:=\, \sum_{z\in {\mathbb B}} \mathbb X(z)\quad\text{and}\quad
\#{\mathbb X} \,:=\, \sum_{z\in[0,1]^d} \mathbb X(z)\,,
\]
and the $k$-dispersion of the multiset ${\mathbb X}$ as
$$
\kdisp_m({\mathbb X},d):= \sup\Big\{\vol_d(\mathbb B)\,\colon\, \mathbb B\in \mathscr B_{\text ax}^d \text{ with }\# ({\mathbb X}\cap \mathbb B)\leq k\Big\}.
$$
Finally, we take the infimum over all possible multisets of cardinality $n$ and obtain
\[
\kdisp_m^*(n,d) := \inf_{\substack{{\mathbb X}\subset [0,1]^d\\ \#{\mathbb X}=n}} \kdisp_m({\mathbb X},d).
\]
As each classical set ${\mathscr P}\in[0,1]^d$ is also a multiset (with the multiplicity function bounded by one), we immediately obtain that
\[
\kdisp_m^*(n,d)\le \kdisp^*(n,d).
\]
On the other hand, if ${\mathbb X}=\{x^1,\dots,x^n\}\subset [0,1]^d$ is a multiset,
then we consider the sets $\{x^1+\xi^1,\dots,x^n+\xi^n\}$ with $\|\xi^j\|_\infty\le \delta$ (for $\delta\in(0,\infty)$), where $\xi^1,\dots,\xi^n$ are independent random vectors which are uniformly distributed over $[-\delta,\delta]^d$. If we then let $\delta\to 0$, it follows that $\kdisp^*(n,d)\le \kdisp_m^*(n,d)$.

\subsection{The partitioning scheme}\label{subsec:partition}

We now introduce a set $\Omega_m$ containing all those boxes $\mathbb B$ with
`large' volume. For $m\in\N$, we define  
\[
\Omega_m:=\Big\{\mathbb B=I_1\times\dots\times I_d\subset[0,1]^d\,\colon\,\vol_d(\mathbb B)>\frac{1}{2^m}\Big\}\,.
\]
As already described before, our approach will later be based on a union bound over all the boxes $\mathbb B\in\Omega_m$. As there are infinitely many of those boxes, we first divide $\Omega_m$ into finitely many `suitable' subsets. 
This is done as follows: for $s=(s_1,\dots,s_d)\in\{0,1,\dots,2^{m}-1\}^d$ and 
$p=(p_1,\dots,p_d)\in\{1/2^{m},\dots,1-1/2^{m}\}^d$, we define the collection 
$\Omega_m(s,p)$ of subsets of $\Omega_m$ to be 
\begin{align*}
\Omega_m(s,p)&:=\bigg\{\mathbb B=I_1\times\dots\times I_d\in\Omega_m \,\colon\, 
\forall \ell\in\{1,\dots,d\}: \frac{s_\ell}{2^{m}}<\vol(I_\ell) \le \frac{s_\ell+1}{2^{m}}\\
&\qquad\qquad \text{and}\quad\inf I_\ell\in \Big[p_\ell-\frac{1}{2^{m}},p_\ell\Big)\bigg\}.
\end{align*}
We first observe that $\Omega_m(s,p)=\emptyset$ if the choice of $s$ does not 
allow $\Omega_m(s,p)$ to contain any box $\mathbb{B}$ with $\vol_d(\mathbb B)>2^{-m}$. 
This holds, e.g., if $s_{\ell_0}=0$ for some $\ell_0\in\{1,\dots,d\}$.
We define the index set
\[
\mathbb I_m \,:=\, \Bigl\{(s,p)\,\colon\, \Omega_m(s,p)\neq\emptyset\Bigr\},
\]
which contains those indices $(s,p)$ that are needed for the following considerations, 
and we bound its cardinality. Let
\[
A_m(s) := \# \Big\{\ell\in\{1,\dots,d\}\,\colon\, s_\ell <  2^m-1 \Big\}
\]
and observe that, by definition, any $\mathbb B\in\Omega_m(s,p)$ must satisfy
$$
\frac{1}{2^m} \,<\, \vol_d(\mathbb B) 
\,\le\, \prod_{\ell=1}^d\frac{s_\ell+1}{2^{m}} 
\,\le\, \Bigl(1-\frac{1}{2^{m}}\Bigr)^{A_m(s)}.
$$
This is a contradiction if $A_m(s)>\log(2)\, m 2^m$. 
Therefore, $\Omega_m(s,p)\neq\emptyset$ implies that 
\begin{equation*}
A_m(s) \,\le\, \min\Bigl\{\lfloor\log(2)\, m 2^m\rfloor,\, d\Bigr\} \,=:\, A_m,
\end{equation*}
i.e., there are at most $A_m$ choices of $\ell$ with $s_\ell<2^m-1$. 
Clearly, there are at most $\binom{d}{A_m} 2^{m A_m}$ 
choices for $s\in\{0,1,\dots,2^{m}-1\}^d$ with $A_m(s)\le A_m$. 
Moreover, for given $s$, 
there are at most $2^{m A_m(s)}$ choices for $p$ with 
$\Omega_m(s,p)\neq\emptyset$. 
This follows from the fact that for each $\ell\in\{1,\dots,d\}$, 
we have at most $2^m-1$ choices for $p_\ell$ (by definition) and, 
if $s_{\ell_0}=2^m-1$ for some $\ell_0\in\{1,\dots,d\}$, 
then we have $\Omega_m(s,p)=\emptyset$ unless $p_{\ell_0}=2^{-m}$. 
For other $p_{\ell_0}$ the boxes cannot be contained in the unit cube.

For $m$ such that $A_m<d$, we obtain
\begin{equation*}
\begin{split}
\#\mathbb I_m \,&<\, \binom{d}{A_m} \, 2^{2m A_m}
\,<\, \biggl(\frac{ed}{A_m}\cdot 2^{2m}\biggr)^{A_m} \\
&<\, \biggl(\frac{4d2^{m+1}}{m}\biggr)^{A_m} 
\,\le\, \exp\Bigl(m2^{m}\log(2^{m+3}d)\Bigr),
\end{split}
\end{equation*}
where we used that $\log(2)m 2^{m-1}< A_m\le \log(2) m 2^{m}$ for $m\in\N$ and 
$e/\log(2)< 4$.

On the other hand, if $A_m=d$, i.e., if $d<\log(2)m2^m$, then we obtain
$$
\#\mathbb I_m\le 2^{md}\cdot 2^{md}\le \exp\Big(\log^2(2)\cdot2m2^m\Big).
$$

Therefore, for arbitrary $m\in\N$, we obtain
\begin{equation}\label{eq:card}
\#\mathbb I_m \,\le\, \exp\Bigl(m2^{m}\log(2^{m+3}d)\Bigr).
\end{equation}
\medskip

\subsection{The proof}

We shall now present the proof of the upper bound on the minimal 
$k$-dispersion. We do this by proving that our random multiset has small 
$k$-dispersion with positive probability, which proves the existence of a 
`good' multiset.

The following result is from \cite[Lemma 3]{UV2018}. Note that 
it is stated there in a different way, but (the end of) its proof clearly shows 
this variant. 
For $(s,p)\in\mathbb I_m$, let
\[
\mathbb B_m(s,p)  \,:=\, \bigcap_{\mathbb B\in \Omega_m(s,p)} \mathbb B
\,=\, \prod_{\ell=1}^d \Bigl[p_\ell,p_\ell+\frac{s_\ell-1}{2^{m}}\Bigr].
\]

\begin{lem}\label{lem:discrete} 
Let $m\in\N$, $(s,p)\in\mathbb I_m$ and 
$z$ be uniformly distributed in $\mathbb M_m^d$. Then
\begin{equation*}
\Pro\big(z\in \mathbb B_m(s,p)\big)\ge \frac{1}{2 ^{m+4}}\,.
\end{equation*}
\end{lem}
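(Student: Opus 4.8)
The quantity we must bound from below is the probability that a uniform point $z\in\mathbb M_m^d$ lands in the fixed box
\[
\mathbb B_m(s,p) \,=\, \prod_{\ell=1}^d \Bigl[p_\ell,\,p_\ell+\tfrac{s_\ell-1}{2^{m}}\Bigr],
\]
which is the \emph{intersection} of all boxes in the cell $\Omega_m(s,p)$. Since $z$ is uniform over the product grid $\mathbb M_m^d$ and its coordinates are independent, the probability factorizes:
\[
\Pro\bigl(z\in\mathbb B_m(s,p)\bigr)=\prod_{\ell=1}^d \Pro\Bigl(z_\ell\in\bigl[p_\ell,\,p_\ell+\tfrac{s_\ell-1}{2^m}\bigr]\Bigr),
\]
where each $z_\ell$ is uniform over $\mathbb M_m=\{1/2^m,\dots,(2^m-1)/2^m\}$. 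The plan is therefore to count, for each $\ell$, how many of the $2^m-1$ grid points fall into the interval of length $(s_\ell-1)/2^m$, multiply the resulting fractions, and show the product is at least $2^{-(m+4)}$.

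The key structural observation is that most factors in the product equal $1$. For any coordinate $\ell$ with $s_\ell=2^m-1$ (the maximal value), the constraint in the definition of $\Omega_m(s,p)$ forces $I_\ell$ to be essentially all of $[0,1]$, so every grid point $z_\ell$ lies in the corresponding interval and that factor is $1$. Thus only the coordinates $\ell$ with $s_\ell<2^m-1$ can contribute a factor below $1$, and there are at most $A_m(s)\le A_m$ such coordinates, as established in Section \ref{subsec:partition}. First I would make this reduction precise, reducing the product to at most $A_m$ genuinely nontrivial factors. For each such factor I would give a crude lower bound on the number of grid points in an interval of length roughly $s_\ell/2^m$: an interval of that length contains at least $s_\ell-1$ grid points, giving a factor of at least $(s_\ell-1)/(2^m-1)$, and I would bound this below uniformly, e.g.\ each nontrivial factor is at least some fixed fraction. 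Multiplying these over the at most $A_m$ coordinates and invoking the lower bound $\vol_d(\mathbb B)>2^{-m}$ (which controls the product of the $s_\ell$) converts the combinatorial count into the target probability.

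The main obstacle I expect is the bookkeeping at the grid-endpoint level: the interval $[p_\ell,\,p_\ell+(s_\ell-1)/2^m]$ may or may not include its endpoints as grid points, and $p_\ell$ itself ranges over a half-open interval of width $2^{-m}$, so I must argue that the worst-case alignment of the interval against the lattice $\mathbb M_m$ still leaves enough points inside. Rather than tracking this exactly, I would use a uniform per-coordinate bound (each nontrivial interval of length $\ge 1/2^m$ catches at least one grid point, and more generally at least a constant fraction of its proportional share), absorbing the constant slack into the final $2^{m+4}$. The cleanest route, since this is quoted as a restatement of \cite[Lemma 3]{UV2018}, is to follow the endgame of that proof: lower-bound the per-coordinate probabilities by $s_\ell/2^{m+1}$ or similar, multiply, and use $\prod_\ell s_\ell \ge 2^{m\cdot\text{(something)}}$ coming from $\vol_d(\mathbb B)>2^{-m}$ together with the bound on the number of small coordinates to extract the single clean factor $2^{-(m+4)}$.
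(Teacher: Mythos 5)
The paper does not actually reprove this lemma (it is quoted from \cite[Lemma 3]{UV2018} with the remark that the proof there ``clearly shows this variant''), so I assess your plan on its own terms. Your skeleton is the right one: factorize over coordinates, observe that every coordinate with $s_\ell=2^m-1$ contributes a factor $1$ (then necessarily $p_\ell=2^{-m}$ and the interval covers all of $\mathbb M_m$), and play the remaining factors off against the volume constraint $\vol_d(\mathbb B)>2^{-m}$. One small but real error first: the interval $\bigl[p_\ell,\,p_\ell+(s_\ell-1)2^{-m}\bigr]$ with $p_\ell\in\mathbb M_m$ contains \emph{exactly} $s_\ell$ grid points (all of $p_\ell, p_\ell+2^{-m},\dots,p_\ell+(s_\ell-1)2^{-m}$ lie in $\mathbb M_m$, since a box of $\Omega_m(s,p)$ must fit inside $[0,1]$), so the per-coordinate probability is exactly $s_\ell/(2^m-1)$; your count ``at least $s_\ell-1$'' gives the vacuous bound $0$ in the case $s_\ell=1$, which does occur.

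The serious gap is the plan to use a ``uniform per-coordinate bound'' --- each nontrivial factor ``at least some fixed fraction'' of its proportional share, e.g.\ $s_\ell/2^{m+1}$ --- and to ``absorb the constant slack into the final $2^{m+4}$''. That slack is not one constant: it is a constant \emph{per nontrivial coordinate}, and the number of nontrivial coordinates can be as large as $A_m=\lfloor\log(2)\,m2^m\rfloor$. Losing a fixed factor $c<1$ in each of them produces $c^{A_m}\approx c^{\,m2^m}$, astronomically below the target $2^{-m-4}$. This is not hypothetical: take $\sim\log(2)\,m2^m$ coordinates with $s_\ell=2^m-2$; then $\prod_\ell s_\ell/(2^m-1)=\bigl(1-\tfrac{1}{2^m-1}\bigr)^{A_m(s)}\approx 2^{-m}$, so the claimed bound is essentially saturated and there is no room for any per-coordinate constant loss. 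What is actually needed is the exact identity $\Pro\bigl(z\in\mathbb B_m(s,p)\bigr)=\prod_\ell s_\ell/(2^m-1)$ combined with the ratio $\frac{s_\ell/(2^m-1)}{(s_\ell+1)/2^m}=\frac{1-1/(s_\ell+1)}{1-2^{-m}}$: the relative loss per coordinate is of order $1/s_\ell$, hence negligible for the (possibly exponentially many) coordinates with $s_\ell$ close to $2^m$, while the constraint $\prod_\ell (s_\ell+1)/2^m>2^{-m}$ caps the number of coordinates with small $s_\ell$ (roughly $m/j$ coordinates with $s_\ell+1\le 2^{m-j}$). Only this stratified accounting, summed over the scales of $s_\ell$, yields a total multiplicative loss bounded by the constant $2^{-4}$; as written, your plan does not reach the stated bound.
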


\medskip

For the random multiset as constructed in Section~\ref{subsec:multiset}, we now estimate the probability that the number of points in 
$\mathbb X\cap \mathbb B_m(s,p)$ does not exceed $k\in\N$, where $k<\frac{n}{2}$ (for the case $k=0$ see~\cite{UV2018}).
Let us consider two cases.
\vskip 1mm
\emph{Case 1:} Assume that $\Pro\big(x^1\in \mathbb B_m(s,p)\big)\le 1/2$. 
Then we use Lemma~\ref{lem:discrete} and obtain the estimate
\[\begin{split}
\Pro\big(\#(\mathbb X & \cap \mathbb B_m(s,p))\le k\big) 
\,=\, \sum_{\ell=0}^k {n\choose \ell} \Pro\big(x^1\in \mathbb B_m(s,p)\big)^{\ell}\ \Pro\big(x^1\not\in \mathbb B_m(s,p)\big)^{n-\ell}\\
&\le\, (k+1){n\choose k}\Pro\big(x^1\not\in\mathbb  B_m(s,p)\big)^{n}
\,\le\, 2k \,\frac{n^k}{k!}\,\Bigl(1-\Pro\big(x^1\in\mathbb B_m(s,p)\big)\Bigr)^n\\
&\le\, \frac{2n^k}{(k-1)!}\,\Bigl(1-\frac{1}{2^{m+4}}\Bigr)^n
\,\le\, \frac{2n^k}{(k-1)!}\, \exp\Big(-\frac{n}{2^{m+4}}\Big).
\end{split}\]
\vskip 1mm
\emph{Case 2:} Assume $\Pro(x^1\in\mathbb  B_m(s,p))>1/2$. Then
\begin{align*}
\Pro(\#(\mathbb X & \cap \mathbb B_m(s,p))\le k) 
\,=\, \sum_{\ell=0}^k {n\choose \ell} \Pro\big(x^1\in \mathbb B_m(s,p)\big)^{\ell}\ \Pro\big(x^1\not\in\mathbb  B_m(s,p)\big)^{n-\ell}\\
&\le\, (k+1){n\choose k} \frac{1}{2^{n-k}} 
\,\le\, \frac{2n^k}{(k-1)!\, 2^{n-k}} 
\,\le\, \frac{2n^k}{(k-1)!}\exp\Big(-\frac{n}{2^{m+4}}\Big),
\end{align*}
where the last inequality follows from the fact that
\[
(n-k)\log(2) > \frac{n}{2}\log(2)\ge \frac{n}{2^{4}}\ge \frac{n}{2^{m+4}}.
\]
\vskip 2mm
Putting both cases together, we see that
\begin{align}\label{ineq:prob}
\Pro\big(\#(\mathbb X\cap \mathbb B_m(s,p))\le k\big)
\,\le\, \frac{2n^k}{(k-1)!}\exp\Big(-\frac{n}{2^{m+4}}\Big).
\end{align}
\smallskip

Recall from Section~\ref{subsec:partition} that 
\[
\Omega_m \,=\, \bigcup_{(s,p)\in\mathbb I_m} \Omega_m(s,p).
\]
Combining the upper bound \eqref{eq:card} on the cardinality of $\mathbb I_m$ 
with the estimate in Lemma~\ref{lem:discrete}, we obtain by a union bound that 
\begin{eqnarray*}
\Pro\big(\exists \mathbb B\in\Omega_m\,:\,\#(\mathbb X\cap \mathbb B)\le k\big) 
& \le &\sum_{(s,p)\in\mathbb I_m} \Pro\big(\exists \mathbb B\in\Omega_m(s,p)\,:\,\#(\mathbb X\cap \mathbb B)\le k\big)\\
&\le&\sum_{(s,p)\in\mathbb I_m} \Pro\big(\#(\mathbb X\cap \mathbb B_m(s,p))\le k\big)\\
&<& \frac{2n^k}{(k-1)!}\,\exp\Bigl(m2^{m}\log(2^{m+3}d)-n2^{-m-4}\Bigr)\,.
\end{eqnarray*}
The last expression will be smaller than or equal to $1$ if and only if 
\begin{equation}\label{eq:condition}
\begin{split}
n2^{-m-4} \,&\geq m2^{m}\log(2^{m+3}d) +\log\Big(\frac{2n^k}{(k-1)!}\Big) \\
\,&= \, m2^{m}\log(2^{m+3}d) + k\, \log\Big(c_k \frac{n}{k}\Big),
\end{split}
\end{equation}
with $c_k:=k\bigl(\frac{2}{(k-1)!}\bigr)^{1/k}$. Note that by Stirling's formula,  
$c_k\uparrow e$ as $k\to\infty$.

To guarantee \eqref{eq:condition}, it is enough to assume that
\[
n\geq m 2^{2m+5}\log(2^{m+3}d) \qquad\text{and}\qquad
n\geq 2^{m+5}\,k\, \log\Big(e\frac{n}{k}\Big). 
\]
It is easy to prove that the second inequality is implied by 
$n\ge k m 2^{m+9}> e k (m+5) 2^{m+5}$.
Hence, we find an $n\in\N$ with \eqref{eq:condition} such that
\[
n \,\le\, C\,m\, 2^m\, \max\Bigl\{2^{m}\,\log(2^m d),\, k  \Bigr\}
\]
for some constant $C\le2^9$.
This ensures that there exists a realization of the multiset $\mathbb X$ 
with cardinality $n$ 
such that, for all boxes $\mathbb B$ with $\vol_d(\mathbb B)>2^{-m}$, we have 
$\#(\mathbb B\cap \mathbb X)>k$. 

Using the argument of Section \ref{subsec:multiset}, we obtain
\begin{align*}
N(2^{-m},d) & := \min \Big\{N\in\N\,:\, \kdisp(N,d)\leq 2^{-m} \Big\} \\
& \leq C\,m\, 2^m\, \max\Bigl\{2^{m}\,\log(2^m d),\, k  \Bigr\}.
\end{align*} 

To finish the proof, let $\varepsilon\in(0,\frac{1}{4})$ and denote by 
$m:=m_\varepsilon\in\N$ the unique integer satisfying 
\[
\frac{1}{2^m}\le\varepsilon< \frac{1}{2^{m-1}}\,,
\] 
i.e., $m=\lceil\log_2(1/\varepsilon)\rceil$. 
By this choice of $m$,
\begin{align*}
 m 2^{2m}\log(2^{m}d) \,<\, c_1\,\log_2(d)\bigg(\frac{\log_2(1/\varepsilon)}{\varepsilon}\bigg)^2,
\end{align*}
and
\[
m 2^{m}\,k \,<\, c_2\, \frac{k\cdot\log_2(1/\varepsilon)}{\varepsilon}
\]
for some constants $c_1,c_2\in(0,\infty)$.
This means that, since $N(\cdot,d)$ is decreasing in the first argument,
\[
N(\varepsilon,d) 
\,\le\, C\cdot \max\left\{\log_2(d)\bigg(\frac{\log_2(1/\varepsilon)}{\varepsilon}\bigg)^2,\, 
\frac{k\cdot\log_2(1/\varepsilon)}{\varepsilon} \right\}
\]
for some constant $C\in(0,\infty)$.
We therefore conclude that 
\[
\kdisp(n,d) \,\le\, C'\,\max\left\{\log_2(n)\sqrt{\frac{\log_2(d)}{n}},\,
	\frac{k\cdot\log_2(n/k)}{n}\right\}
\]
with an absolute constant $C'\in(0,\infty)$.



\section{Proof of Theorem \ref{thm:main 2} -- the lower bound}\label{sec:thm2}

The proof is very much inspired by the proof of the lower bound on the dispersion 
of Aistleitner, Hinrichs and Rudolf \cite{AHR17}. 
We recall their argument in a slightly modified form.

Given a point set $\mathscr P_n=\{x_1,\dots,x_n\}$ with 
$x_i=(x_{i,1},\dots,x_{i,d})\in[0,1]^d$, 
we define the matrix $A=A(\mathscr P_n)\in\R^{n\times d}$ by
\[
A_{i,j} \,=\, \begin{cases}
1 & :\, x_{i,j} \ge 1/2\,; \\
0 & \,\text{ otherwise},
\end{cases}
\]
with $i=1,\dots,n$ and $j=1,\dots,d$. 
Note that 
if $A$ contains two equal columns, then the projection of the point set on 
the two coordinates corresponding to these columns is contained in the 
union of the lower-left and the upper-right quarter of the unit square. 
Therefore, the dispersion is at least $1/4$.
Likewise, if two columns $c_1,c_2\in\{0,1\}^{n}$ of $A$ satisfy $c_1=1-c_2$, 
then the projection is contained in the upper-left and the lower-right quarter 
and the dispersion is at least $1/4$.
Recall that $A$ has $d$ columns.
It is clear from the pigeon hole principle that there must be two columns 
that satisfy one of the above conditions whenever $d>2^{n-1}$.
This implies
\[
\odisp^*\!\big(\lceil\log_2 d\rceil, d\big) \,\ge\, 1/4.
\]

We now consider the $k$-dispersion for $k\ge1$. 
Following the above arguments, if there are two columns $c_1,c_2$ of $A$ that 
agree (or disagree) in all but $k$ entries, then there exists a box of 
volume $1/4$ that contains at most $k$ points. 
Again, from the pigeon hole principle
(and just ignoring $k$ rows), we obtain that such columns exist 
whenever $d>2^{n-k-1}$. This implies
\begin{equation}\label{eq:init}
\kdisp^*\!\big(k+\lceil\log_2 d\rceil, d\big) \,\ge\, 1/4.
\end{equation}

Finally, note that Lemma~1 from \cite{AHR17} holds also for the $k$-dispersion, 
i.e., for all $n,k,d,\ell\in\N$ we have
\begin{equation}\label{eq:rec}
\kdisp^*(n,d) \,\ge\, \frac{(\ell+1)\, \kdisp^*(\ell,d)}{n+\ell+1}.
\end{equation}

From this we conclude Theorem \ref{thm:main 2}.

\begin{proof}[Proof of Theorem \ref{thm:main 2}]
For $n\le k+\log_2(d)$, we use \eqref{eq:init} and obtain 
$\kdisp^*(n,d)\ge1/4\ge1/8$. For $n> k+\log_2(d)$, we use \eqref{eq:rec} 
with $\ell=k+\lceil\log_2(d)\rceil\le n$ and obtain
\[
\kdisp^*(n,d) \,\ge\, \frac{(\ell+1)\, \kdisp^*(\ell,d)}{n+\ell+1}
\,\ge\, \frac14\,\frac{(\ell+1)}{n+\ell+1}
\,\ge\, \frac{k+\log_2(d)}{8n}.
\]
\end{proof}

\begin{remark}
As the method to obtain \eqref{eq:init} is clearly related to packing numbers 
on the discrete cube $\{0,1\}^n$ with respect to the Hamming metric, 
one could try to apply more involved methods to obtain better bounds.
For example, the well-known \emph{sphere-packing bound} 
(also known as Hamming bound), see \cite[Theorem 5.2.7]{Lint1992}, states that the maximal size of a $k$-packing 
of the cube, say $M(n,k)$, satisfies
\[
M(n,k) \,\le\, \frac{2^n}{\sum_{t=0}^{\lfloor(k-1)/2\rfloor}\binom{n}{t}}.
\]
However, using this bound does not lead to any significant improvement.
\end{remark}

\bibliographystyle{abbrv}
\bibliography{k-dispersion}

\end{document}